\newtheorem{theorem}{Theorem}[section]
\newtheorem{lemma}[theorem]{Lemma}
\newtheorem{corollary}[theorem]{Corollary}
\newtheorem{proposition}[theorem]{Proposition}
\newtheorem{sublemma}{}[theorem]
\theoremstyle{definition}
\theoremstyle{remark}
\numberwithin{equation}{section}
\newcommand{\ba}{\backslash}
\begin{document}

\title[Cographs and $1$-sums]{Cographs and $1$-sums}


\author{Jagdeep Singh}
\address{Department of Mathematics and Statistics\\
Binghamton University\\
Binghamton, New York}
\email{jsingh@binghamton.edu}

\subjclass{05C40, 05C75, 05C83}
\date{\today}

\begin{abstract}
A graph that can be generated from $K_1$ using joins and $0$-sums is called a cograph. We define a sesquicograph to be a graph that can be generated from $K_1$ using joins, $0$-sums, and $1$-sums. We show that, like cographs, sesquicographs are closed under induced minors. Cographs are precisely the graphs that do not have the 4-vertex path as an induced subgraph. We obtain an analogue of this result for sesquicographs, that is, we find those non-sesquicographs for which every proper induced subgraph is a sesquicograph. 
\end{abstract}

\maketitle

\section{Introduction}
\label{intro}
In this paper, we only consider finite and simple graphs. The notation and terminology follows \cite{text} except where otherwise indicated. For graphs $G$ and $H$ having disjoint vertex sets, the {\bf $0$-sum} $G \oplus H$ of $G$ and $H$ is their disjoint union. A {\bf $1$-sum} $G \oplus_1 H$ of $G$ and $H$ is obtained by identifying a vertex of $G$ with a vertex of $H$. The {\bf join} $G \bigtriangledown H$ of two disjoint graphs $G$ and $H$ is obtained from the $0$-sum of $G$ and $H$ by joining every vertex of $G$ to every vertex of $H$. A {\bf cograph} is a graph that can be generated from the single- vertex graph $K_1$ using the operations of join and $0$-sum. We define a graph to be a {\bf sesquicograph} if it can be generated from $K_1$ using the operations of join, $0$-sum, and $1$-sum. The class of cographs has been extensively studied over the last fifty years (see, for example, \cite{corneil2, jung, sein}). Due to the following characterization, cographs are also called $P_4$-free graphs \cite{corneil}.

\begin{theorem}
\label{cographs_characterisation}
A graph $G$ is a cograph if and only if $G$ does not contain the path $P_4$ on four vertices as an induced subgraph.
\end{theorem}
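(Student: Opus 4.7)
The plan is to prove both directions by induction, with the forward direction being a routine check and the backward direction resting on a classical structural lemma.

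For the forward direction, I would induct on the construction sequence that builds the cograph $G$ from copies of $K_1$. The base case $G = K_1$ is vacuous. For the inductive step, write $G = G_1 \oplus G_2$ or $G = G_1 \bigtriangledown G_2$ with $G_1, G_2$ being $P_4$-free cographs. Since $P_4$ is connected, any induced $P_4$ in a $0$-sum lies in one summand, contradicting the inductive hypothesis. In the join case, every vertex of $G_1$ is adjacent to every vertex of $G_2$, so if an induced $P_4 = v_1 v_2 v_3 v_4$ had vertices in both parts, the non-adjacent endpoints $v_1, v_4$ would be forced into the same part, say $G_1$; then the non-edges $v_1 v_3$ and $v_2 v_4$ force $v_2, v_3 \in G_1$ as well, contradicting the hypothesis.

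For the backward direction, I would induct on $|V(G)|$. The base case $|V(G)| = 1$ is immediate. Since $P_4$-freeness is inherited by induced subgraphs (and by complementation, because $\overline{P_4} \cong P_4$), the inductive step reduces to showing that a $P_4$-free graph $G$ on at least two vertices is either disconnected (yielding a $0$-sum decomposition) or has a disconnected complement (yielding a join decomposition). By induction, each piece of such a decomposition is a cograph, and hence so is $G$.

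The key lemma is therefore: if $G$ is $P_4$-free on at least two vertices, then $G$ or $\overline{G}$ is disconnected. I would argue by contradiction, assuming both $G$ and $\overline{G}$ are connected and searching for an induced $P_4$. First, connectedness of $G$ together with $P_4$-freeness forces the diameter of $G$ to be at most $2$, because a shortest path of length $3$ is itself an induced $P_4$. Fix a vertex $v$, let $L_1 = N(v)$, and let $L_2 = V(G) \setminus (\{v\} \cup L_1)$. Since $\overline{G}$ is connected and $|V(G)| \geq 2$, no vertex of $G$ is adjacent to all others, so $L_2$ is nonempty. Pick $u \in L_2$; for any $y \in L_1 \cap N(u)$ and $x \in L_1 \setminus N(u)$, the sequence $u, y, v, x$ has all required edges and non-edges of a $P_4$ except possibly $yx$, so $P_4$-freeness forces $yx \in E(G)$. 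Applying the symmetric statement to $\overline{G}$ (where analogous levels $\overline{L_1}, \overline{L_2}$ exist) and combining the two structural conclusions yields the desired contradiction.

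I expect the main obstacle to be the delicate case analysis in this key lemma, where conclusions drawn in $G$ must be reconciled with those drawn in $\overline{G}$ to pin down an induced $P_4$. Once the structural lemma is in place, both directions of the theorem follow by straightforward induction.
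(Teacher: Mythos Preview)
The paper does not prove this theorem at all: it is quoted in the introduction as a known characterisation, with a reference to Corneil, Lerchs, and Stewart~\cite{corneil}, and no proof is given. So there is no ``paper's own proof'' to compare against.

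On its merits, your plan is sound. The forward direction is complete and correct. For the backward direction you have correctly isolated the crux: the lemma that a $P_4$-free graph on at least two vertices has $G$ or $\overline{G}$ disconnected. Your BFS-layer argument is a legitimate start, but the step you flag as the obstacle (``combining the two structural conclusions'') really is where the work lies, and as written it is only a gesture. A shorter way to finish the lemma, avoiding the symmetry bookkeeping, is the minimal-counterexample argument: take $G$ smallest with both $G$ and $\overline{G}$ connected and $P_4$-free; delete a vertex $v$ and, after possibly swapping $G$ with $\overline{G}$, assume $G-v$ is disconnected with components $C_1,\dots,C_k$. Connectedness of $G$ gives $v$ a neighbour in each $C_i$; connectedness of $\overline{G}$ gives $v$ a non-neighbour in some $C_1$, and hence adjacent $x,y\in C_1$ with $vx\in E(G)$, $vy\notin E(G)$. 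Picking any neighbour $z$ of $v$ in $C_2$ yields the induced path $y\,x\,v\,z$, a contradiction. This replaces your layer analysis with a two-line endgame.
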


Since we consider only simple graphs in this paper, when we write $G/e$ for an edge $e$ of a graph $G$, we mean the simple graph obtained from the multigraph that results from contracting the edge $e$ by deleting all but one edge from each class of parallel edges. An {\bf induced minor} of a graph $G$ is a graph $H$ that can be obtained from $G$ by a sequence of operations each consisting of a vertex deletion or an edge contraction. In Section $2$, we show that every induced minor of a sesquicograph is a sesquicograph. In addition, we provide an alternative definition of a sesquicograph in terms of the vertex connectivities of its induced subgraphs and their complements. The graph obtained from a $6$-cycle by adding a chord to create two $4$-cycles is called the {\bf domino} graph. We let $C_6^+$ denote the domino; $\overline{P_5}$ is the complement of a $5$-vertex path. The next theorem is the main result of the paper.

\begin{theorem}
\label{main}
A graph $G$ is a sesquicograph if and only if $G$ does not contain any of the following graphs as an induced subgraph:

\begin{enumerate}[label=(\roman*)]
    \item cycles of length exceeding four, and
    \item $\overline{P_5}, C_6^+, H_1, H_2, H_3, H_4,$ and $H_5$,
\end{enumerate}

where the graphs in $(ii)$ are shown in Figure $\ref{Figure}$.
\end{theorem}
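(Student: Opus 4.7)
My plan is to establish the two directions of the biconditional separately.

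\emph{Necessity.} Suppose $G$ is a sesquicograph. Section~2 shows that the class of sesquicographs is closed under induced minors, and hence under induced subgraphs, so it suffices to verify that no graph in the list is itself a sesquicograph. Observe that the three building operations produce, respectively, a disconnected graph ($\oplus$), a graph with a cut vertex ($\oplus_1$), and a graph with disconnected complement ($\bigtriangledown$). Consequently any graph on at least two vertices that is both $2$-connected and has connected complement cannot be written in any of these three ways, and so is not a sesquicograph. I would finish this direction by a direct inspection that each of $C_n$ for $n \geq 5$, $\overline{P_5}$, $C_6^+$, and $H_1, \ldots, H_5$ is $2$-connected with connected complement.

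\emph{Sufficiency.} I would argue the contrapositive via a minimal counterexample. Suppose $G$ is not a sesquicograph, and pick an induced subgraph $H$ of $G$ of minimum order that fails to be a sesquicograph. By the observation above $H$ is $2$-connected with connected complement, and by minimality every proper induced subgraph of $H$ is a sesquicograph. The task is to show that $H$ is isomorphic to one of the graphs in (i) or (ii).

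If $H$ has an induced cycle $C_n$ with $n \geq 5$, minimality of $H$ forces $H \cong C_n$. Otherwise every induced cycle of $H$ has length at most four. Since $\overline{H}$ is connected and $|V(H)| \geq 2$, $H$ is not a cograph, so Theorem~\ref{cographs_characterisation} supplies an induced $P_4$, say $P = v_1 v_2 v_3 v_4$. I would then classify each remaining vertex $v \in V(H) \setminus V(P)$ by its neighborhood $N(v) \cap V(P) \subseteq \{v_1,v_2,v_3,v_4\}$. Many of the sixteen possible patterns are immediately excluded (a vertex adjacent to exactly $\{v_1\}$ together with $P$ gives an induced $P_5$, which must then be extended without creating a forbidden hole; certain pairs force an induced $\overline{P_5}$ or $C_6^+$; and so on), and the admissible patterns must be reconciled with the $2$-connectivity of $H$ and the connectedness of $\overline{H}$.

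The main obstacle is the resulting case analysis. The difficulty is not any single case but the bookkeeping required to enumerate attachment types to the fixed $P_4$ (and, iteratively, to the slightly larger induced subgraphs produced by each new vertex), using only the forbidden structures listed in the theorem to kill each case. I expect the analysis to be organized first by $|V(H)|$: the small cases $|V(H)| \leq 7$ should match $\overline{P_5}$, $C_6^+$, or one of $H_1,\ldots,H_5$ directly, and the induction step should show that any additional vertex either creates a cut vertex in $H$, disconnects $\overline{H}$, or produces a previously classified forbidden induced subgraph, contradicting the minimality of $H$.
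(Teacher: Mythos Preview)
Your necessity argument and the opening of the sufficiency argument are fine and match the paper: reduce to an induced-subgraph-minimal non-sesquicograph $H$, observe that $H$ is $2$-connected with $\overline{H}$ connected, and peel off the case where $H$ has a long induced cycle (so $H\cong C_n$, $n\ge 5$). Where your proposal stalls is exactly the part that carries all the weight. A $P_4$-extension case analysis could in principle succeed, but you have given no mechanism to bound $|V(H)|$; your ``induction step'' (any extra vertex creates a cut vertex, disconnects $\overline{H}$, or produces a listed obstruction) is asserted, not argued, and it is far from obvious that the sixteen attachment types to a fixed $P_4$ can be whittled down using only the seven listed graphs and the absence of long holes. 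As it stands the proposal is a plan that postpones the actual theorem.

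The paper proceeds quite differently. After the same setup it splits on whether $H$ is \emph{critically} $2$-connected. If it is, a lemma of Nebesk\'y guarantees two internally-degree-two paths of length three; combining this with the no-long-induced-cycle constraint forces $H$ to be a cycle or the domino $C_6^+$. If $H$ is not critically $2$-connected, then (since $H-v$ must fail to be $2$-connected or $\overline{H}-v$ must be disconnected for every $v$) one gets that $H$ has vertex connectivity exactly two and $\overline{H}$ has vertex connectivity exactly one. A short argument then shows $\overline{H}$ has at most three cut vertices, and a case analysis on that number yields the bound $|V(H)|\le 6$. The five-vertex case gives $\overline{P_5}$ uniquely; the six-vertex case is finished by a computer search producing $H_1,\dots,H_5$. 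The structural dichotomy plus Nebesk\'y is what replaces your open-ended $P_4$ bookkeeping and, crucially, is what delivers the finite bound on $|V(H)|$ that your outline lacks.
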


Its proof occupies most of Section $3$. As a consequence of Theorem \ref{main}, we have the following characterization of sesquicographs in terms of forbidden induced minors.

\begin{corollary}
\label{main_corollary}
A graph $G$ is a sesquicograph if and only if $G$ has no induced minor isomorphic to a graph in $\{C_5, \overline{P_5}, H_1, H_2, H_3, H_4, H_5\}$, where $C_5$ is the cycle of length five.
\end{corollary}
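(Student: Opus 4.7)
The plan is to derive Corollary \ref{main_corollary} from Theorem \ref{main} together with the fact (announced for Section 2) that the class of sesquicographs is closed under induced minors. The two forbidden lists differ only in that the family of cycles of length exceeding four is collapsed to the single graph $C_5$ and the domino $C_6^+$ is dropped. So beyond invoking these two results, the real content is to show that every cycle of length at least five and every copy of $C_6^+$ contains some member of $\{C_5, \overline{P_5}, H_1, H_2, H_3, H_4, H_5\}$ as an induced minor.

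For the forward direction, assume $G$ is a sesquicograph. Each graph in the corollary's list is among the forbidden induced subgraphs of Theorem \ref{main}, so none of them is itself a sesquicograph; by induced-minor closure, $G$ has no induced minor isomorphic to any of them.

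For the converse, I argue by contrapositive. Suppose $G$ is not a sesquicograph, and let $H$ be a forbidden induced subgraph of $G$ provided by Theorem \ref{main}. If $H$ is one of $\overline{P_5}, H_1, H_2, H_3, H_4, H_5$, then $H$ itself is the required induced minor. If $H \cong C_n$ for some $n \geq 5$, then contracting any single edge of the cycle produces $C_{n-1}$; iterating $n-5$ times exhibits $C_5$ as an induced minor of $G$.

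The only case that requires an actual computation, and which I expect to be the main obstacle to a completely mechanical argument, is $H \cong C_6^+$. Here I label the vertices so that $v_1 v_2 v_3 v_4 v_5 v_6$ is the $6$-cycle and $v_1 v_4$ is the chord, and I contract the edge $v_2 v_3$ to a vertex $w$. The resulting simple graph has vertex set $\{v_1, w, v_4, v_5, v_6\}$ and edge set $\{v_1 w,\, w v_4,\, v_1 v_4,\, v_4 v_5,\, v_5 v_6,\, v_6 v_1\}$, namely a triangle on $\{v_1, w, v_4\}$ glued to the $4$-cycle $v_1 v_6 v_5 v_4$ along the edge $v_1 v_4$. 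Checking the assignment $(a,b,c,d,e) \mapsto (v_1, v_5, w, v_6, v_4)$ against the edges $\{ac, ad, ae, bd, be, ce\}$ of $\overline{P_5}$ confirms that this contracted graph is isomorphic to $\overline{P_5}$, so $\overline{P_5}$ appears as an induced minor of $G$. This completes the remaining case and hence the proof.
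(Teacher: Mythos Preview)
Your proof is correct and follows essentially the same approach as the paper: invoke Theorem~\ref{main} together with closure under induced minors, observe that longer cycles contract to $C_5$, and verify that contracting an edge of $C_6^+$ yields $\overline{P_5}$. The paper states these two reductions without the explicit isomorphism check you give, but the argument is the same.
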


A graph $G$ is a \textbf{$2$-cograph} if it can be generated from $K_1$ using the operations of complementation, $0$-sum, and $1$-sum. The class of $2$-cographs has been studied in \cite{oxjag}. This paper has some similarities with \cite{oxjag} although the arguments for sesquicographs are not as complex as they are for $2$-cographs. Since the class of sesquicographs is the smallest class of graphs that contains $K_1$ and is closed under the operations of join, $0$-sum, and $1$-sum, it is a proper subclass of $2$-cographs and, thus, of the class of perfect graphs. Note the path $P_5$ on five vertices is a sesquicograph but its complement $\overline{P_5}$ is not. It follows that the class of sesquicographs is not closed under complementation unlike the classes of cographs and $2$-cographs.

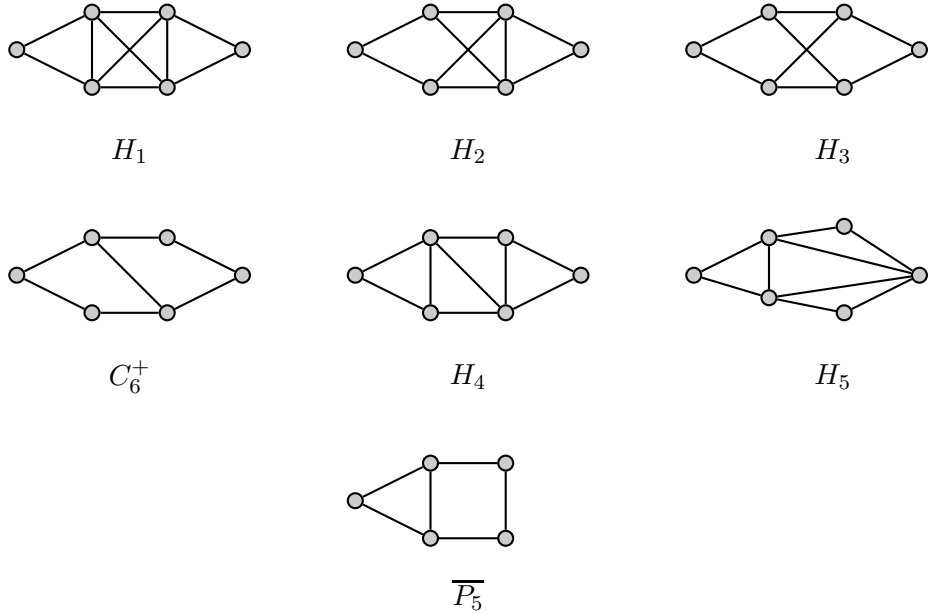
\begin{figure}
    \centering
    \begin{tikzpicture}[scale=0.5,colorstyle/.style={circle, draw=black!100,fill=black!20, thick, inner sep=2pt, minimum size=0.5mm}]

    \node (a1) at (-18,6.2)[colorstyle]{};
    \node (a2) at (-16,7.2)[colorstyle]{};
    \node (a3) at (-14,7.2)[colorstyle]{};
    \node (a4) at (-12,6.2)[colorstyle]{};
    \node (a6) at (-16,5.2)[colorstyle]{};
    \node (a5) at (-14,5.2)[colorstyle]{};
    
    \draw[thick] (a1)--(a2)--(a3)--(a4)--(a5)--(a6)--(a1);
    
    \draw[thick] (a2)--(a5)--(a3)--(a6)--(a2);
    
    \node (a) at (-15,3.5){$H_1$};

    \node (b1) at (-9,6.2)[colorstyle]{};
    \node (b2) at (-7,7.2)[colorstyle]{};
    \node (b3) at (-5,7.2)[colorstyle]{};
    \node (b4) at (-3,6.2)[colorstyle]{};
    \node (b6) at (-7,5.2)[colorstyle]{};
    \node (b5) at (-5,5.2)[colorstyle]{};
    
    \draw[thick] (b1)--(b2)--(b3)--(b4)--(b5)--(b6)--(b1);
    
    \draw[thick] (b2)--(b5)--(b3)--(b6);
    
    \node (b) at (-6,3.5){$H_2$};

    \node (c1) at (0,6.2)[colorstyle]{};
    \node (c2) at (2,7.2)[colorstyle]{};
    \node (c3) at (4,7.2)[colorstyle]{};
    \node (c4) at (6,6.2)[colorstyle]{};
    \node (c6) at (2,5.2)[colorstyle]{};
    \node (c5) at (4,5.2)[colorstyle]{};
    
    \draw[thick] (c1)--(c2)--(c3)--(c4)--(c5)--(c6)--(c1);
    
    \draw[thick] (c2)--(c5);
    \draw[thick] (c3)--(c6);
    
    \node (c) at (3.7,3.5){$H_3$};

     \node (d1) at (-18,0.2)[colorstyle]{};
    \node (d2) at (-16,1.2)[colorstyle]{};
    \node (d3) at (-14,1.2)[colorstyle]{};
    \node (d4) at (-12,0.2)[colorstyle]{};
    \node (d6) at (-16,-0.8)[colorstyle]{};
    \node (d5) at (-14,-0.8)[colorstyle]{};
    
    \draw[thick] (d1)--(d2)--(d3)--(d4)--(d5)--(d6)--(d1);
    \draw[thick] (d2)--(d5);
    
    \node (d) at (-15,-2.5){$C_6^+$};

    \node (e1) at (-9,0.2)[colorstyle]{};
    \node (e2) at (-7,1.2)[colorstyle]{};
    \node (e3) at (-5,1.2)[colorstyle]{};
    \node (e4) at (-3,0.2)[colorstyle]{};
    \node (e6) at (-7,-0.8)[colorstyle]{};
    \node (e5) at (-5,-0.8)[colorstyle]{};
    
    \draw[thick] (e1)--(e2)--(e3)--(e4)--(e5)--(e6)--(e1);
    
    \draw[thick] (e6)--(e2)--(e5)--(e3);
    
    \node (e) at (-6,-2.5){$H_4$};

    \node (f1) at (0,0.2)[colorstyle]{};
    \node (f2) at (2,1.2)[colorstyle]{};
    \node (f3) at (4,1.5)[colorstyle]{};
    \node (f4) at (6,0.2)[colorstyle]{};
    \node (f6) at (2,-0.4)[colorstyle]{};
    \node (f5) at (4,-0.8)[colorstyle]{};
    
    \draw[thick] (f1)--(f2)--(f3)--(f4)--(f5)--(f6)--(f1);
    
    \draw[thick] (f2)--(f6)--(f4)--(f2);
    
    \node (f) at (3.7,-2.5){$H_5$};

    \node (g1) at (-9,-5.8)[colorstyle]{};
    \node (g2) at (-7,-4.8)[colorstyle]{};
    \node (g3) at (-5,-4.8)[colorstyle]{};
    \node (g6) at (-7,-6.8)[colorstyle]{};
    \node (g5) at (-5,-6.8)[colorstyle]{};
    
    \draw[thick] (g1)--(g2)--(g3)--(g5)--(g6)--(g1);
    \draw[thick] (g2)--(g6);
    
    \node (g) at (-6,-8.3){$\overline{P_5}$};
    
\end{tikzpicture}
\caption{The induced-subgraph-minimal non-sesquicographs.}
\label{Figure}
\end{figure}

\section{Preliminaries}

Let $G$ be a graph. A vertex $u$ of $G$ is a {\bf neighbour} of a vertex $v$ of $G$ if $uv$ is an edge of $G$.  The {\bf neighbourhood $N_G(v)$} of $v$ in $G$ is the set of all neighbours of $v$ in $G$. If $G$ is connected, a {\bf $t$-cut} of $G$ is set $X_t$ of vertices of $G$ such that $|X_t| = t$ and $G - X_t$ is disconnected. A graph that has no $t$-cuts for all $t$ less than $k$ is {\bf $k$-connected}.
Viewing $G$ as a subgraph of $K_n$ where $n = |V(G)|$, we colour the edges of $G$ green while assigning the colour red to the non-edges of $G$. Similar to the terminology in \cite{oxjag}, we use the terms {\bf green graph} and {\bf red graph} for $G$ and its complementary graph $\overline{G}$, respectively. An edge of $G$ is called a {\bf green edge} while a {\bf red edge} refers to an edge of $\overline{G}$. The {\bf green degree} of a vertex $v$ of $G$ is the number of {\bf green neighbours} of $v$, while the {\bf red degree} of $v$ is its number of {\bf red neighbours}.

We omit the straightforward proofs of the next three results.

\begin{lemma}
\label{small_sesquicographs}
All graphs having at most four vertices are sesquicographs.
\end{lemma}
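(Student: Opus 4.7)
The plan is a straightforward case analysis on $|V(G)|$, exhibiting for each graph an explicit construction from $K_1$ using joins, $0$-sums, and $1$-sums.

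For $|V(G)| \le 3$, the enumeration is brief. Clearly $K_1$ is a sesquicograph by definition. The two graphs on two vertices are $K_2 = K_1 \bigtriangledown K_1$ and $\overline{K_2} = K_1 \oplus K_1$. For three vertices, we have $\overline{K_3}$ (a $0$-sum of three copies of $K_1$), $K_2 \oplus K_1$, $P_3 = K_1 \bigtriangledown \overline{K_2}$, and $K_3 = K_1 \bigtriangledown K_2$.

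For $|V(G)| = 4$, I would enumerate the eleven non-isomorphic graphs. The disconnected ones, $\overline{K_4}$, $K_2 \oplus \overline{K_2}$, $2K_2$, $P_3 \oplus K_1$, and $K_3 \oplus K_1$, are obtained as $0$-sums of sesquicographs on at most three vertices. Among the connected ones, the graphs that admit a join decomposition are $K_{1,3} = K_1 \bigtriangledown \overline{K_3}$, $C_4 = \overline{K_2} \bigtriangledown \overline{K_2}$, the diamond $K_4 - e = \overline{K_2} \bigtriangledown K_2$, and $K_4 = K_2 \bigtriangledown K_2$. The remaining two connected graphs are precisely the ones that force us to use the $1$-sum operation: $P_4$ is the $1$-sum of $P_3$ and $K_2$ (identify a leaf of $P_3$ with an endpoint of $K_2$), and the paw graph (a triangle with a pendant edge) is the $1$-sum of $K_3$ and $K_2$ (identify any vertex of $K_3$ with an endpoint of $K_2$). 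Since every graph on at most four vertices appears in this list, the conclusion follows.

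There is no real obstacle here beyond being certain the enumeration is complete; the existence of the $1$-sum operation in the generating set is exactly what rescues $P_4$ and the paw, which are the two four-vertex graphs that are neither a join nor a $0$-sum of smaller sesquicographs.
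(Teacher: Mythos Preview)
Your proof is correct and complete; the paper omits its own proof as ``straightforward,'' and your explicit enumeration is precisely the routine case analysis the authors are gesturing at. There is nothing to add.
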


\begin{lemma}
\label{join_observation}
A graph $G$ is a join of two graphs if and only if its complement $\overline{G}$ is disconnected.
\end{lemma}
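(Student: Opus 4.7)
The plan is to prove both implications directly from the definitions by translating between the green graph $G$ and the red graph $\overline{G}$, using the fact that the join operation is defined precisely by the absence of certain edges in $\overline{G}$.

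For the forward direction, suppose $G = G_1 \bigtriangledown G_2$ with $V(G_1)$ and $V(G_2)$ disjoint and nonempty. By the definition of join, every vertex in $V(G_1)$ is green-adjacent to every vertex in $V(G_2)$. Translating to red: there is no red edge between $V(G_1)$ and $V(G_2)$ in $\overline{G}$. Since both $V(G_1)$ and $V(G_2)$ are nonempty, this exhibits a partition of $V(\overline{G})$ with no crossing edges, so $\overline{G}$ is disconnected.

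For the backward direction, suppose $\overline{G}$ is disconnected. Let $A$ be the vertex set of one connected component of $\overline{G}$ and let $B = V(G) \setminus A$; both sets are nonempty. Then there is no red edge joining $A$ to $B$, so in $G$ every vertex of $A$ is green-adjacent to every vertex of $B$. Hence $G = G[A] \bigtriangledown G[B]$, exhibiting $G$ as a join.

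There is no real obstacle here: the lemma is essentially a restatement of the join operation via the complement, and the proof amounts to the observation that ``all edges present between $A$ and $B$ in $G$'' and ``no edges present between $A$ and $B$ in $\overline{G}$'' are the same condition. The only care needed is to ensure the two parts of the partition are nonempty, which is automatic from the definition of a join and from the existence of a proper component in $\overline{G}$.
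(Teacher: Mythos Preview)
Your proof is correct. The paper itself omits the proof of this lemma as straightforward, so there is no argument to compare against; your direct two-direction translation between the join in $G$ and the absence of crossing edges in $\overline{G}$ is exactly the standard justification.
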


\begin{lemma}
\label{complementation_contraction_together}
Let $G$ be a graph and let $uv$ be an edge $e$ of $G$. Then  $\overline{G/e}$ is the graph obtained by adding a vertex $w$ with neighbourhood $N_{\overline{G}}(u) \cap N_{\overline{G}}(v)$ to the graph $\overline{G} - \{u,v\}$.
\end{lemma}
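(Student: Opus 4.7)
The plan is to verify the identity by direct comparison of vertex sets and edge sets. Write $V' = V(G) \setminus \{u,v\}$, so that the contraction $G/e$ has vertex set $V' \cup \{w\}$ where $w$ is the vertex obtained by identifying $u$ and $v$. I would split the verification into the two types of pairs that can arise in $\overline{G/e}$: pairs inside $V'$, and pairs of the form $\{w,x\}$ with $x \in V'$.

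For pairs inside $V'$, the contraction of $e$ affects only the endpoints of $e$, so the induced subgraph of $G/e$ on $V'$ equals $G[V']$. Passing to complements, the induced subgraph of $\overline{G/e}$ on $V'$ equals $\overline{G[V']}$, which is exactly $\overline{G} - \{u,v\}$. This matches the first piece of the claimed description.

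For the edges from $w$ to $V'$, by the definition of simple-graph contraction given in the introduction, for $x \in V'$ one has $wx \in E(G/e)$ if and only if $x \in N_G(u) \cup N_G(v)$. Therefore $wx \notin E(G/e)$, equivalently $wx \in E(\overline{G/e})$, if and only if $x$ is a non-neighbour of both $u$ and $v$ in $G$; since $x \neq u,v$, this is equivalent to $x \in N_{\overline{G}}(u) \cap N_{\overline{G}}(v)$. Hence the neighbourhood of $w$ in $\overline{G/e}$ is precisely $N_{\overline{G}}(u) \cap N_{\overline{G}}(v)$, completing the identification.

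There is essentially no obstacle here: once the definition of simple-graph contraction (with parallel edges collapsed) is unpacked, the statement reduces to applying De~Morgan to the union $N_G(u) \cup N_G(v)$. The only thing to take care of is that the simplification step in forming $G/e$ does not create or destroy any adjacency that would spoil the complement on $V'$, which is immediate because that simplification only affects edges incident with~$w$.
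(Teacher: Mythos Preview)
Your proof is correct; the paper omits the proof of this lemma as straightforward, and your direct verification---splitting into pairs inside $V'$ and pairs involving $w$, then using De~Morgan on $N_G(u)\cup N_G(v)$---is exactly the routine argument the omission signals.
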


\begin{lemma}
\label{closed_induced_subgraph}
Every induced subgraph of a sesquicograph is a sesquicograph.
\end{lemma}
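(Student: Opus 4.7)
The plan is to proceed by strong induction on $|V(G)|$. The base case $|V(G)|=1$ is immediate, since the only nonempty induced subgraph of $K_1$ is $K_1$ itself. For the inductive step with $|V(G)|\geq 2$, I use that $G$ is expressible as $G_1 \ast G_2$ for some $\ast\in\{\oplus,\bigtriangledown,\oplus_1\}$ and sesquicographs $G_1,G_2$, each with strictly fewer vertices than $G$. In the $1$-sum case I may assume both $|V(G_i)|\geq 2$, since a $1$-sum with $K_1$ returns the other summand and can be decomposed further.

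Given an induced subgraph $H=G[S]$, set $S_i=S\cap V(G_i)$ and $H_i=G_i[S_i]$; by the inductive hypothesis each nonempty $H_i$ is a sesquicograph. If $\ast\in\{\oplus,\bigtriangledown\}$, the vertex sets $V(G_1)$ and $V(G_2)$ are disjoint, and the operation restricts to give $H=H_1\ast H_2$; if one $H_i$ is empty, $H$ equals the other and is a sesquicograph by induction. For $\ast=\oplus_1$ with identified vertex $v$, the key observation is that $G-v$ equals the $0$-sum $(G_1-v)\oplus(G_2-v)$, since a $1$-sum adds no edges between the two parts. Hence if $v\notin S$, then $H=H_1\oplus H_2$ and the preceding case applies. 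If $v\in S$, then $v$ lies in both $S_1$ and $S_2$, so $H=H_1\oplus_1 H_2$ identifying $v$; each $H_i$ is a sesquicograph by induction, and in the degenerate case $H_i=\{v\}$ we simply have $H=H_{3-i}$.

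The only nontrivial observation is that deleting the identified vertex from a $1$-sum produces a $0$-sum of the two deletions. Beyond this, the argument is a routine structural induction on the construction of $G$, and I do not anticipate a genuine obstacle.
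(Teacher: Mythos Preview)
Your proof is correct and follows essentially the same approach as the paper: structural induction on $|V(G)|$, splitting on whether the last operation is $\oplus$, $\bigtriangledown$, or $\oplus_1$, with the key observation that removing the identified vertex from a $1$-sum yields a $0$-sum of the deletions. The only cosmetic difference is that the paper reduces to single-vertex deletions $G-v$ and uses Lemma~\ref{small_sesquicographs} for the base case, whereas you handle an arbitrary induced subgraph $G[S]$ directly from the base case $K_1$.
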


\begin{proof}
Let $G$ be a sesquicograph. It is enough to show that, for every vertex $v$ of $G$, the graph $G-v$ is a sesquicograph. Note that if $|V(G)| \leq 5$, then our result follows by Lemma \ref{small_sesquicographs}. Let $|V(G)| = n$. We proceed via induction on $|V(G)|$ and assume that the result is true for all sesquicographs with order less than $n$. Since $G$ is a sesquicograph, $G$ is a $0$-sum, a $1$-sum, or a join of proper induced subgraphs $X$ and $Y$ of $G$. Observe that if $G$ is $X \oplus Y$ or $X \bigtriangledown Y$, then $G-v$ equals $(X-v) \oplus Y$ or $(X-v) \bigtriangledown Y$, and so the result follows by induction. Therefore we may assume that $G = X \oplus_1 Y$. Note that, in this case, $G-v$ is either $(X-v) \oplus (Y-v)$ or $(X-v) \oplus_1 Y$. Thus our result follows by induction.
\end{proof}

A graph is {\bf trivial} if it contains only one vertex and no edge. Cographs can also be characterized as the graphs in which every non-trivial connected induced subgraph has a disconnected complement. Similarly, a graph $G$ is a $2$-cograph if $G$ has no non-trivial induced subgraph $H$ such that both $H$ and $\overline{H}$ are $2$-connected. Next we show that sesquicographs can be characterized in a similar way.

\begin{proposition}
\label{alternate_definition}
A graph $G$ is a sesquicograph if and only if, for every non-trivial induced subgraph $H$ of $G$, the graph $H$ is not $2$-connected or $\overline{H}$ is disconnected.
\end{proposition}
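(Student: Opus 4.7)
The plan is a two-direction argument, with the forward direction following quickly from the recursive definition and Lemma~\ref{join_observation}, and the backward direction being a straightforward induction on $|V(G)|$.

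For the forward direction, by Lemma~\ref{closed_induced_subgraph} every induced subgraph of a sesquicograph is itself a sesquicograph, so it suffices to verify that every non-trivial sesquicograph $H$ is either not $2$-connected or has disconnected complement. Since $H$ is non-trivial, the recursive definition writes $H$ as $X \oplus Y$, $X \oplus_1 Y$, or $X \bigtriangledown Y$ for proper induced subgraphs $X$ and $Y$. In the first case $H$ is disconnected, in the second $H$ has a cut vertex (the shared vertex), and in either case $H$ fails to be $2$-connected. In the third case, Lemma~\ref{join_observation} gives $\overline{H} = \overline{X} \oplus \overline{Y}$, which is disconnected.

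For the backward direction I would induct on $|V(G)|$. The base case $|V(G)| \le 4$ is handled by Lemma~\ref{small_sesquicographs}. For the inductive step, assume the statement for all graphs of smaller order and apply the hypothesis to $H = G$ itself: either $\overline{G}$ is disconnected or $G$ is not $2$-connected. If $\overline{G}$ is disconnected then Lemma~\ref{join_observation} yields a decomposition $G = X \bigtriangledown Y$ with $X, Y$ proper induced subgraphs of $G$. If $G$ is not $2$-connected and is disconnected, pick a component $G_1$ and let $G_2 = G - V(G_1)$, so $G = G_1 \oplus G_2$. If $G$ is connected with a cut vertex $v$, let $C$ be a component of $G-v$, set $G_1 = G[V(C) \cup \{v\}]$ and $G_2 = G[(V(G) \setminus V(C)) \cup \{v\}]$, so that $G = G_1 \oplus_1 G_2$. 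In every case the two pieces are proper induced subgraphs of $G$, and since any induced subgraph of either piece is an induced subgraph of $G$, each piece inherits the hypothesis and is a sesquicograph by the inductive hypothesis; hence $G$ is a sesquicograph.

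There is no real obstacle here: the one point that requires a moment's thought is ensuring that the smaller pieces inherit the hypothesis, but this is immediate from the fact that an induced subgraph of an induced subgraph is an induced subgraph. The only minor subtlety is the interpretation of ``not $2$-connected'' to include disconnected graphs, which is handled by splitting into the two subcases above in the inductive step.
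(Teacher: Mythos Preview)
Your proof is correct and follows essentially the same approach as the paper's: both directions hinge on Lemma~\ref{closed_induced_subgraph} and Lemma~\ref{join_observation} exactly as you use them. The only difference is cosmetic: for the backward direction the paper compresses your induction into a single sentence, observing that the hypothesis lets every non-trivial induced subgraph be split as a $0$-sum, $1$-sum, or join, so $G$ is built from $K_1$ by these operations; your explicit induction with the three subcases spells out the same reasoning.
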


\begin{proof}
Let $G$ be a sesquicograph and let $H$ be a non-trivial induced subgraph of $G$. By Lemma \ref{closed_induced_subgraph}, $H$ is a sesquicograph. Since $H$ can be decomposed as a $0$-sum, a $1$-sum, or a join, it follows by Lemma \ref{join_observation}, that $H$ is not $2$-connected or $\overline{H}$ is disconnected.

Conversely, let $G$ be a graph such that, for every non-trivial induced subgraph $H$ of $G$, the graph $H$ is not $2$-connected or $\overline{H}$ is disconnected. By Lemma \ref{join_observation}, it follows that every non-trivial subgraph of $G$ can be written as a $0$-sum, a $1$-sum, or a join of smaller induced subgraphs of $G$. Therefore $G$ can be generated from $K_1$ using the operations of $0$-sum, $1$-sum, and join. Thus $G$ is a sesquicograph.
\end{proof}


A slight variation of the proof of the closure of $2$-cographs under contractions  \cite[Proposition $2.8$]{oxjag} shows that sesquicographs are also closed under contractions.

\begin{proposition}
\label{closed_under_contractions}
Let $G$ be a sesquicograph and $e$ be an edge of $G$. Then $G/e$ is a sesquicograph.
\end{proposition}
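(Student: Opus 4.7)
The plan is to proceed by strong induction on $|V(G)|$. When $|V(G)| \leq 4$, the contracted graph $G/e$ has at most three vertices and is a sesquicograph by Lemma~\ref{small_sesquicographs}, so I may assume $|V(G)| \geq 5$ and that the statement holds for all sesquicographs of smaller order. Since $G$ has more than one vertex, the recursive definition supplies smaller sesquicographs $X$ and $Y$ with $G$ equal to one of $X \oplus Y$, $X \bigtriangledown Y$, or $X \oplus_1 Y$, and I would split into these three cases and then further into sub-cases based on where the endpoints of $e$ sit relative to the decomposition.

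In the $0$-sum case $G = X \oplus Y$, the edge $e$ lies in a single summand (say $X$), so $G/e = (X/e) \oplus Y$ and the induction closes at once. In the join case $G = X \bigtriangledown Y$ with $e = uv$, if both endpoints of $e$ lie in the same summand the argument is the same as for the $0$-sum. If instead $u \in V(X)$ and $v \in V(Y)$, a brief neighbourhood computation shows that the contracted vertex is adjacent in $G/e$ to every other vertex, because the join makes $u$ adjacent to all of $V(Y)\setminus\{v\}$ and $v$ adjacent to all of $V(X)\setminus\{u\}$. Consequently $G/e$ is the join of $K_1$ with $G-\{u,v\}$, and the second factor is a sesquicograph by Lemma~\ref{closed_induced_subgraph}.

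The remaining case is $G = X \oplus_1 Y$ with identification vertex $x$. Because $V(X) \setminus \{x\}$ and $V(Y) \setminus \{x\}$ span no edges, $e$ lies wholly in one summand, say $X$. When $x$ is not an endpoint of $e$, the contraction happens entirely inside $X$ and $G/e = (X/e) \oplus_1 Y$, still glued at $x$, and the induction closes. When $x$ is an endpoint of $e$, I would let $w$ denote the new vertex of $X/e$ and verify that its $G/e$-neighbourhood is $N_{X/e}(w) \cup N_Y(x)$, with the two parts sitting inside the disjoint vertex sets $V(X)\setminus\{x,v\}$ and $V(Y)\setminus\{x\}$. Combined with the fact that no edges are created between $V(X/e)\setminus\{w\}$ and $V(Y)\setminus\{x\}$ during contraction, this exhibits $G/e$ as $(X/e) \oplus_1 Y$ with $w$ playing the role of the glue vertex, and the induction closes.

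I expect the last sub-case to be the only real obstacle: one has to check explicitly that contracting a $1$-sum edge incident to the glue vertex creates no unwanted adjacencies across the two summands and still yields a $1$-sum rather than something more intricate. Every other case is essentially bookkeeping once the position of $e$ relative to the decomposition is recorded, so the heart of the argument is the neighbourhood computation for the contracted glue vertex.
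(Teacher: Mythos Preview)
Your inductive argument via the recursive decomposition is correct. Each sub-case checks out: the $0$-sum and the ``same-side'' join and $1$-sum cases reduce to $(X/e)$ combined with $Y$ by the obvious operation; the cross-join case yields $K_1 \bigtriangledown (G-\{u,v\})$ as you say; and in the final sub-case, identifying the contracted vertex $w \in V(X/e)$ with $x \in V(Y)$ does recover $G/e$, since the neighbourhood of $w$ in $G/e$ is exactly $N_{X/e}(w) \cup N_Y(x)$ and no edges are introduced between $V(X)\setminus\{x,v\}$ and $V(Y)\setminus\{x\}$.

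The paper takes a different route. Rather than induct on the recursive definition, it argues by contradiction through the structural characterisation of Proposition~\ref{alternate_definition}: if $G/e$ were not a sesquicograph, some induced subgraph $H$ of $G/e$ would be $2$-connected with $\overline{H}$ connected, and one then lifts $H$ back to an induced subgraph $H'$ of $G$ by re-expanding the contracted vertex, checking (with Lemma~\ref{complementation_contraction_together} for the complement side) that $H'$ is still $2$-connected with connected complement. Your approach is more elementary in that it avoids both Proposition~\ref{alternate_definition} and Lemma~\ref{complementation_contraction_together}, working directly from the defining operations; the price is a longer case analysis. The paper's approach is shorter and, as noted there, transfers essentially verbatim from the analogous result for $2$-cographs, which is presumably why it was preferred.
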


\begin{proof}
Assume to the contrary that $G/e$ is not a sesquicograph. Then there is a non-trivial induced subgraph $H$ of $G/e$ such that $H$ is $2$-connected and $\overline{H}$ is connected. Let $e = uv$ and let $w$ denote the vertex in $G/e$ obtained by identifying $u$ and $v$. We may assume that $w$ is a vertex of $H$, otherwise $H$ is an induced subgraph of $G$, a contradiction. We assert that the subgraph $H'$ of $G$ induced on the vertex set $(V(H) \cup \{u,v\}) - \{w\}$ is $2$-connected and its complement $\overline{H'}$ is connected. To see this, note that, since $H$ is $2$-connected, $H'$ is $2$-connected unless one of $u$ and $v$, say $u$, is a leaf of $H'$. In the exceptional case, we have $H' - u \cong H$, so $G$ has a $2$-connected induced subgraph for which its complement is connected, a contradiction. We deduce that $H'$ is $2$-connected. 

Note that, by Lemma \ref{complementation_contraction_together}, $\overline{H}$ is obtained from $\overline{H'}$ by adding a vertex $w$ with neighbourhood $N_{\overline{H'}}(u) \cap N_{\overline{H'}}(v)$ to the graph $\overline{H'}-\{u,v\}$. Since $\overline{H}$ is connected, it follows that $\overline{H'}$ is connected, a contradiction.
\end{proof}

It now follows that the class of sesquicographs is closed under taking induced minors.
Since we can compute the components and blocks of a graph in  polynomial time \cite[4.1.23]{west}, the algorithm in Figure \ref{identify} recognizes sesquicographs in polynomial time.

\begin{figure}
\begin{algorithmic}[]
\REQUIRE Input a simple graph $G$
\STATE Set $H \leftarrow G$, BlocksList $\leftarrow [G]$

\IF{$|V(H)| \leq 4$}

\STATE remove $H$ from BlocksList

\IF{BlocksList is empty}

\STATE return $G$ is a sesquicograph and exit the algorithm

\ELSE

\STATE update $H$ to be an element of BlocksList

\ENDIF
\ENDIF

\IF{$H$ is not $2$-connected}

\STATE remove $H$ from BlocksList
\STATE Decompose $H$ into $2$-connected blocks and add all the blocks of $H$ to BlocksList
\STATE update $H$ to be an element of BlocksList

\ELSIF{$\overline{H}$ is not connected}

\STATE remove $H$ from BlocksList
\STATE Decompose $\overline{H}$ into connected components and add the complements of all the components to BlocksList
\STATE update $H$ to be an element of BlocksList

\ELSE

\STATE return $G$ is not a sesquicograph and exit the algorithm

\ENDIF

\end{algorithmic}
\caption{Algorithm for recognizing a sesquicograph.}
\label{identify}
\end{figure}

\section{Induced-subgraph-minimal non-sesquicographs}

We noted in Section $2$ that sesquicographs are closed under induced subgraphs. In this section, we consider those non-sesquicographs for which every proper induced subgraph is a sesquicograph. We call these graphs {\bf induced-subgraph-minimal non-sesquicographs}. The goal of this section is to characterize such graphs. We begin by showing that all cycles of length exceeding four are examples of such graphs.

\begin{lemma}
\label{cycles_are_minimal_graphs}
Let $G$ be a cycle of length exceeding four. Then $G$ is an induced-subgraph-minimal non-sesquicograph.
\end{lemma}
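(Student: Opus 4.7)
The plan is to verify the two halves of ``induced-subgraph-minimal non-sesquicograph'' separately, leveraging the alternative characterization in Proposition~\ref{alternate_definition}. Thus I need to (a) exhibit a non-trivial induced subgraph of $G = C_n$ (namely $G$ itself) that is $2$-connected with connected complement, and (b) verify that every proper induced subgraph of $C_n$ is a sesquicograph.

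For part (a), $C_n$ is clearly $2$-connected since every cycle is. For the complement, I would argue that $\overline{C_n}$ is connected whenever $n \geq 5$. In $C_n$ label the vertices $v_0, v_1, \dots, v_{n-1}$ cyclically. Any two vertices at cyclic distance at least two are adjacent in $\overline{C_n}$, so it suffices to connect an edge $v_i v_{i+1}$ of $C_n$ by a short red path; the vertex $v_{i+3}$ (indices mod~$n$) has cyclic distance at least two from both $v_i$ and $v_{i+1}$ provided $n \geq 5$, giving a path $v_i\,v_{i+3}\,v_{i+1}$ in $\overline{C_n}$. Hence $\overline{C_n}$ is connected, and Proposition~\ref{alternate_definition} (applied with $H=G$) shows $C_n$ is not a sesquicograph.

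For part (b), note that a proper induced subgraph of $C_n$ is obtained by removing at least one vertex, so it is a disjoint union of paths. Using the $0$-sum operation, it suffices to show that every path $P_m$ is a sesquicograph. This follows by induction on $m$: the base cases $m \in \{1,2\}$ are immediate since $K_1$ is a sesquicograph and $K_2 = K_1 \bigtriangledown K_1$, and for $m \geq 3$ I can write $P_m = P_{m-1} \oplus_1 K_2$ by identifying an endpoint of $P_{m-1}$ with an endpoint of $K_2$. Combining these observations via $0$-sums handles all proper induced subgraphs.

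The argument has no real obstacle; the only point where a little care is needed is verifying that $\overline{C_n}$ is connected when $n = 5$, because there $v_{i+3} = v_{i-2}$ and one must check that the chosen path of length two is legitimate. This is immediate from the cyclic-distance computation above, so the proof is short.
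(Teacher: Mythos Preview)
Your proof is correct and follows essentially the same approach as the paper: use Proposition~\ref{alternate_definition} with $H=G$ to see that $C_n$ is not a sesquicograph, and then observe that proper induced subgraphs are (disjoint unions of) paths, hence sesquicographs. The paper is terser---it asserts without proof that $\overline{C_n}$ is $2$-connected and that paths are sesquicographs, and it only checks $G-v$ (relying implicitly on Lemma~\ref{closed_induced_subgraph} for smaller induced subgraphs)---but you have simply filled in these details explicitly.
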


\begin{proof}
Note that both $G$ and $\overline{G}$ are $2$-connected and so, by Proposition \ref{alternate_definition}, $G$ is not a sesquicograph. It is now enough to show that, for any vertex $v$ of $G$, the graph $G-v$ is a sesquicograph. Observe that $G-v$ is a path and so is a sesquicograph.
\end{proof}

The next result can be easily checked. 

\begin{lemma}
\label{other_minimal_graphs}
The graphs $\overline{P_5}, C_6^+, H_1, H_2, H_3, H_4,$ and $H_5$ are induced-subgraph-minimal non-sesquicographs.
\end{lemma}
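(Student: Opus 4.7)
The plan is to verify, for each graph $G$ in $\{\overline{P_5}, C_6^+, H_1, H_2, H_3, H_4, H_5\}$, the two defining conditions of being an induced-subgraph-minimal non-sesquicograph: $(a)$ $G$ itself is not a sesquicograph, and $(b)$ every proper induced subgraph of $G$ is a sesquicograph.

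For $(a)$, I would apply Proposition~\ref{alternate_definition} with $H = G$: it is enough to show that $G$ is $2$-connected and that $\overline{G}$ is connected. For $\overline{P_5}$ this is direct: $\overline{P_5}$ has minimum degree $2$ and no cut vertex, and $\overline{\overline{P_5}} = P_5$ is connected. For each of $C_6^+$ and $H_1, \dots, H_5$, the underlying hexagon is a spanning $6$-cycle, which makes $G$ itself $2$-connected, and the connectedness of $\overline{G}$ is read off directly from the list of non-edges (there are between four and eight of them per graph, and in each case they form a connected spanning subgraph on the six vertices).

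For $(b)$, any proper induced subgraph of one of the listed graphs has at most five vertices. For $\overline{P_5}$ the bound is four, so Lemma~\ref{small_sesquicographs} finishes the argument immediately. For each six-vertex graph $G$, fix a vertex $v$ and consider $G - v$; by Lemmas~\ref{small_sesquicographs} and~\ref{closed_induced_subgraph} every four-vertex induced subgraph is already a sesquicograph, so Proposition~\ref{alternate_definition} reduces the sesquicograph-ness of $G - v$ to the single check that $G - v$ itself is not $2$-connected or has a disconnected complement. I would run this vertex by vertex, using the evident vertex-orbit structure of the graphs in Figure~\ref{Figure} (for example, $H_1$ has only two orbits under its automorphism group, namely the two degree-$2$ vertices and the four degree-$4$ vertices, so only two cases arise there).

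The main obstacle is simply the case analysis: six graphs, up to six vertex deletions each, pared by symmetry to a handful of checks per graph. In practice, deleting a degree-$2$ corner typically leaves a graph with a vertex of degree one, hence a cut vertex, while deleting an interior vertex typically leaves two vertices whose common non-neighbourhood covers the rest of the graph, forcing the complement to be disconnected. The verification is routine, which is presumably why the author asserts that the lemma ``can be easily checked.''
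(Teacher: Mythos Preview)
Your proposal is correct and is precisely the routine verification the paper omits (the paper gives no proof, only the remark that the result ``can be easily checked''). Your use of Proposition~\ref{alternate_definition} to certify non-sesquicograph-ness via $2$-connectivity of $G$ and connectedness of $\overline{G}$, together with the reduction of part~(b) to a single five-vertex check per deleted vertex via Lemma~\ref{small_sesquicographs}, is exactly the natural way to carry this out; the invocation of Lemma~\ref{closed_induced_subgraph} is not strictly needed, since Lemma~\ref{small_sesquicographs} combined with the forward direction of Proposition~\ref{alternate_definition} already handles all induced subgraphs of $G-v$ on at most four vertices.
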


\begin{lemma}
\label{minimal_graphs_connectivity}
Let $G$ be an induced-subgraph-minimal non-sesquicograph. Then $G$ is $2$-connected and $\overline{G}$ is connected.
\end{lemma}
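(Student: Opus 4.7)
The plan is to derive both properties directly from Proposition~\ref{alternate_definition}, using the minimality hypothesis to force the ``witness'' induced subgraph in that characterization to be $G$ itself.

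First I would unpack what it means for $G$ to be a non-sesquicograph. By the ``only if'' direction of Proposition~\ref{alternate_definition} (contrapositive form), there must exist a non-trivial induced subgraph $H$ of $G$ such that $H$ is $2$-connected and $\overline{H}$ is connected. Call such an $H$ a \emph{witness}.

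Next I would use the minimality of $G$ to locate the witness. Suppose for contradiction that the witness $H$ is a proper induced subgraph of $G$. Then by the minimality assumption, $H$ is a sesquicograph. Applying the ``if'' direction of Proposition~\ref{alternate_definition} to $H$, every non-trivial induced subgraph of $H$ must fail to be $2$-connected or must have disconnected complement. Taking $H$ itself as this induced subgraph (it is non-trivial since it is $2$-connected) yields a contradiction, because $H$ is $2$-connected and $\overline{H}$ is connected. Therefore $H = G$, which immediately gives that $G$ is $2$-connected and $\overline{G}$ is connected.

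There is essentially no obstacle here; the whole argument is a direct two-step deduction from the alternative characterization in Proposition~\ref{alternate_definition} combined with the minimality hypothesis. The only thing to be careful about is checking that the witness $H$ is non-trivial (hence a legitimate input to the characterization applied to $H$), which is automatic because a $1$-vertex graph is not $2$-connected.
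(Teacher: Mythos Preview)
Your proof is correct. It differs slightly from the paper's argument: the paper argues by contradiction on the conclusion, observing that if $G$ fails to be $2$-connected or $\overline{G}$ is disconnected then $G$ decomposes as a $0$-sum, $1$-sum, or (via Lemma~\ref{join_observation}) a join of proper induced subgraphs, which are sesquicographs by minimality, forcing $G$ to be a sesquicograph. You instead invoke Proposition~\ref{alternate_definition} twice to pin the witness down to $G$ itself. Both routes are short and equivalent in strength; yours leans on the characterization, while the paper's leans directly on the recursive definition.
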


\begin{proof}
Assume the contrary. Then for some proper induced subgraphs $X$ and $Y$ of $G$, we can decompose $G$ as $X \oplus Y,$ as $X \oplus_1 Y$, or, by Lemma \ref{join_observation}, as $X \bigtriangledown Y$. Since $G$ is an induced-subgraph-minimal non-sesquicograph, both $X$ and $Y$ are sesquicographs. It now follows that $G$ is a sesquicograph, a contradiction.
\end{proof}

A $2$-connected graph $H$ is \textbf{critically $2$-connected} if $H-v$ is not $2$-connected for all vertices $v$ of $H$.

\begin{lemma}
\label{two_cases}
Let $G$ be an induced-subgraph-minimal non-sesquicograph. Then $G$ is critically $2$-connected, or $G$ has vertex connectivity two and $\overline{G}$ has vertex connectivity one.
\end{lemma}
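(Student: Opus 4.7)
The plan is to combine Lemma \ref{minimal_graphs_connectivity} with Proposition \ref{alternate_definition} applied to the proper induced subgraphs of the form $G-v$. Lemma \ref{minimal_graphs_connectivity} already records $\kappa(G) \geq 2$ and $\kappa(\overline{G}) \geq 1$, so if $G$ is critically $2$-connected there is nothing more to prove. I would therefore assume $G$ is \emph{not} critically $2$-connected and aim to establish both $\kappa(G) = 2$ and $\kappa(\overline{G}) = 1$.

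For the bound on $\overline{G}$: by the negation of critically $2$-connected there exists a vertex $v$ such that $G-v$ is still $2$-connected. Since $G$ is induced-subgraph-minimal, $G-v$ is a sesquicograph, and Proposition \ref{alternate_definition} applied to the $2$-connected sesquicograph $G-v$ forces $\overline{G-v} = \overline{G}-v$ to be disconnected. Hence $v$ is a cut vertex of the connected graph $\overline{G}$, yielding $\kappa(\overline{G}) = 1$.

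For the bound on $G$: suppose for contradiction that $\kappa(G) \geq 3$. By Lemma \ref{small_sesquicographs} we have $|V(G)| \geq 5$, so for every vertex $u$ of $G$ the elementary inequality $\kappa(G-u) \geq \kappa(G) - 1 \geq 2$ shows that $G-u$ is $2$-connected. Repeating the argument of the previous paragraph for each such $u$, $\overline{G}-u$ is disconnected for every $u \in V(G)$, meaning every vertex of $\overline{G}$ is a cut vertex. This contradicts the standard fact that every connected graph on at least two vertices has at least two non-cut vertices (e.g.\ two leaves of any spanning tree).

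The main, and essentially only, substantive step is the translation ``$G-v$ is $2$-connected $\Rightarrow$ $v$ is a cut vertex of $\overline{G}$'' delivered by Proposition \ref{alternate_definition}; once this translation is in hand, the dichotomy in the statement is produced by completely elementary facts about vertex connectivity together with Lemma \ref{small_sesquicographs} to ensure $G$ is large enough for the standard non-cut-vertex result to bite.
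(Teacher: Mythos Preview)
Your proof is correct and follows essentially the same approach as the paper: both rely on Lemma~\ref{minimal_graphs_connectivity}, the translation ``$G-v$ $2$-connected $\Rightarrow$ $\overline{G}-v$ disconnected'' via Proposition~\ref{alternate_definition}, and the fact that a connected graph has a non-cut vertex. The only difference is organizational: the paper establishes $\kappa(G)=2$ directly by first choosing a non-cut vertex of $\overline{G}$ (so the conclusion $\kappa(G)=2$ holds even in the critically $2$-connected case), whereas you reach it by contradiction after assuming $\kappa(G)\geq 3$; this is the contrapositive of the same argument.
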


\begin{proof}
Note that, by Lemma \ref{minimal_graphs_connectivity}, $G$ is $2$-connected and $\overline{G}$ is connected, and, by Proposition \ref{alternate_definition}, for each vertex $v$ of $G$, the graph $G-v$ is not $2$-connected or $\overline{G}-v$ is disconnected. Observe that $G$ has a vertex $v$ such that $\overline{G}-v$ is connected and so $G-v$ is not $2$-connected. Therefore $G$ has vertex connectivity two. Suppose that $G$ is not critically $2$-connected. Then there is a vertex $w$ of $G$ such that $G-w$ is $2$-connected and so $\overline{G}-w$ is disconnected. Therefore the vertex connectivity of $\overline{G}$ is one. 
\end{proof}

Next we find those induced-subgraph-minimal non-sesquicographs $G$ such that $G$ is critically $2$-connected. We will use the following result of Nebesky \cite{nebesky}.

\begin{lemma}
\label{nebesky_result}
Let $G$ be a critically $2$-connected graph such that $|V(G)| \geq 6$. Then $G$ has at least two distinct paths of length exceeding two such that the internal vertices of these paths have degree two in $G$.
\end{lemma}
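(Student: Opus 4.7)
Lemma~\ref{nebesky_result} is attributed to Nebesky \cite{nebesky}, so the paper will almost certainly just cite it; I nevertheless sketch how I would reconstruct a proof. The natural first move is to pass to the \emph{suppression multigraph} $H$ of $G$: replace every maximal path of $G$ whose internal vertices have degree two (call such a path a \emph{chain}) by a single edge of $H$, labelled by its length. Then $V(H)$ is precisely the set of vertices of $G$ of degree at least three, and paths in $G$ of length exceeding two whose internal vertices have degree two correspond bijectively to edges of $H$ of length at least three. The problem becomes: \emph{prove that $H$ has at least two edges of length $\geq 3$.}

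The first case is the degenerate case $V(H) = \varnothing$, in which $G$ is itself a cycle. Since $|V(G)| \geq 6$, I can split $G$ at any two vertices whose shorter arc has length at least three to obtain two distinct chains of length $\geq 3$, and we are done.

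The second case is $|V(H)| \geq 1$, in which $H$ has minimum degree at least three (since every vertex of $H$ had degree $\geq 3$ in $G$). Here I would exploit criticality: for every vertex $v$ of $G$, the graph $G - v$ has a cut vertex. Choose a vertex $v$ of $G$ with $v \in V(H)$; a cut vertex $u$ of $G-v$ must be such that some component of $G - \{u,v\}$ is separated only by a short chain through $v$, which forces at least one chain incident to $v$ to have length $\geq 3$. Since this holds at every vertex of $H$, an averaging argument (each long chain is incident to at most two vertices of $H$) combined with $|V(H)| \geq 2$ yields two distinct chains of length $\geq 3$; the subcase $|V(H)| = 1$ requires a separate check using loops in $H$ and $|V(G)| \geq 6$.

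The main obstacle is the quantitative step in the second case: upgrading criticality from ``at least one long chain'' to ``at least two long chains''. This is the genuine content of Nebesky's theorem. The cleanest route I see is to combine the classical Dirac-type bound that a critically $2$-connected graph has at least $(n+4)/3$ vertices of degree two with a case analysis on the block-cut tree of $G - v$ for a well-chosen $v$, ruling out the configuration where all but one chain has length $\leq 2$ by exhibiting a vertex whose removal keeps $G$ two-connected. I would expect this case analysis, rather than the reduction to $H$, to occupy the bulk of a self-contained proof.
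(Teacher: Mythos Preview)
You are right that the paper does not prove Lemma~\ref{nebesky_result}: it is stated with a citation to Nebesky~\cite{nebesky} and used as a black box in the proof of Proposition~\ref{critically_2_connected_case}. There is therefore no proof in the paper to compare your sketch against.

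A couple of remarks on the sketch itself. Your reduction to the suppression multigraph $H$ is the natural move, and the cycle case is fine. In the second case, however, the sentence ``a cut vertex $u$ of $G-v$ must be such that some component of $G-\{u,v\}$ is separated only by a short chain through $v$, which forces at least one chain incident to $v$ to have length $\geq 3$'' does not obviously follow: a $2$-cut $\{u,v\}$ with $v\in V(H)$ need not isolate a single chain, since $v$ has degree at least three and several chains may leave $v$ into the same side of the cut. So even the ``at least one long chain at each vertex of $H$'' step needs more care than you indicate, not just the upgrade to two. You candidly flag the quantitative step as the real content, and that is accurate; a clean self-contained argument along your lines would indeed require the kind of case analysis you describe at the end, and is not shorter than simply invoking~\cite{nebesky}.
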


\begin{lemma}
\label{adjacent_nebesky}
Let $G$ be an induced-subgraph-minimal non-sesquicograph such that $G$ is not isomorphic to a cycle and let $wxyz$ be a path $P$ of $G$ such that both $x$ and $y$ have degree two in $G$. Then $w$ and $z$ are adjacent.
\end{lemma}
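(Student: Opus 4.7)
The plan is to argue by contradiction: I will assume $wz \notin E(G)$ and produce an induced cycle of length at least five in $G$, which will contradict either the induced-subgraph-minimality of $G$ together with Lemma \ref{cycles_are_minimal_graphs}, or the hypothesis that $G$ is not a cycle.

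First I would note that $|V(G)| \geq 5$: by Lemma \ref{small_sesquicographs} every graph on at most four vertices is a sesquicograph, whereas $G$ is not. By Lemma \ref{minimal_graphs_connectivity} the graph $G$ is $2$-connected, so $G - x$ is connected; in $G - x$ the vertex $y$ has only $z$ as a neighbour and hence is a leaf, so deleting $y$ preserves connectedness and $G - \{x,y\}$ is connected. Thus there is a $w$-$z$ path in $G - \{x,y\}$, and I would choose such a path $P'$ of minimum length. Since $wz \notin E(G)$ by assumption, $P'$ has length at least two and therefore at least one interior vertex.

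Next I would form the cycle $C$ obtained by concatenating $P'$ with the path $w$-$x$-$y$-$z$, so $C$ has length $|P'| + 3 \geq 5$. The crucial claim is that $C$ is induced in $G$. A shortest $w$-$z$ path $P'$ in $G - \{x,y\}$ is automatically induced in $G - \{x,y\}$, and any edge from $w$ or from $z$ to an interior vertex of $P'$ would yield a strictly shorter $w$-$z$ path in $G - \{x,y\}$, contradicting the choice of $P'$. The edge $wz$ is excluded by the contradiction hypothesis. Since $x$ and $y$ each have degree two in $G$, with all their neighbours already lying in $\{w,x,y,z\}$, there are no edges from $\{x,y\}$ to interior vertices of $P'$, and neither $xz$ nor $yw$ is an edge. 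Hence $C$ has no chord and is an induced cycle.

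To conclude, if $V(C) \subsetneq V(G)$ then $C$ is a proper induced subgraph of $G$, and by Lemma \ref{cycles_are_minimal_graphs} $C$ is not a sesquicograph, contradicting the induced-subgraph-minimality of $G$; otherwise $V(C) = V(G)$ and $G$ itself coincides with the cycle $C$, contradicting the hypothesis that $G$ is not a cycle. The main step that requires care is the verification that $C$ is chordless, but this reduces cleanly to the shortest-path choice of $P'$ together with the degree-two hypothesis on $x$ and $y$.
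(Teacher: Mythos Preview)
Your proof is correct and follows essentially the same route as the paper: assume $wz\notin E(G)$, use $2$-connectivity to find a second $w$--$z$ path avoiding $\{x,y\}$, take it shortest, and obtain an induced cycle of length at least five, contradicting minimality or the non-cycle hypothesis. Your justification that $G-\{x,y\}$ is connected (via the leaf argument) and your explicit chord check are more detailed than the paper's version, but the underlying idea is identical.
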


\begin{proof}
Assume that $w$ and $z$ are not adjacent. By Lemma \ref{minimal_graphs_connectivity}, $G$ is $2$-connected, so there is a path $P'$ joining $w$ and $z$ such that $P$ and $P'$ are internally disjoint. We may assume that $P'$ is a shortest such path. It now follows that $G$ has a cycle $C$ of length exceeding four as an induced subgraph. Since a cycle of length exceeding four is not a sesquicograph, $G = C$, a contradiction.
\end{proof}

\begin{proposition}
\label{critically_2_connected_case}
Let $G$ be an induced-subgraph-minimal non-sesquicograph such that $G$ is critically $2$-connected. Then $G$ is isomorphic to a cycle of length exceeding four or to the domino.
\end{proposition}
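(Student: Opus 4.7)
The plan is to apply Lemma \ref{nebesky_result} to produce two induced $4$-cycles in $G$, argue they must share both of their ``chord endpoints'', and conclude the union of their vertex sets induces $C_6^+$. First, reduce to $|V(G)| \geq 6$: by Lemma \ref{small_sesquicographs}, $|V(G)| \geq 5$, and an inspection of the $2$-connected graphs on five vertices shows that $C_5$ is the only critically $2$-connected one, so if $G$ is not a cycle then $|V(G)| \geq 6$. Lemma \ref{nebesky_result} then supplies a path $v_0 v_1 \ldots v_k$ of length $k \geq 3$ with $v_1, \ldots, v_{k-1}$ of degree $2$ in $G$. Lemma \ref{adjacent_nebesky} applied to $v_0 v_1 v_2 v_3$ forces $v_0 v_3 \in E(G)$; were $k \geq 4$, then $v_3$ would have neighbours $v_0, v_2, v_4$, contradicting that $v_3$ has degree $2$. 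Hence $k = 3$, and $C := v_0 v_1 v_2 v_3$ is an induced $4$-cycle in $G$ with $v_1, v_2$ of degree $2$.

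The key intermediate claim is that no vertex outside $C$ is adjacent to both $v_0$ and $v_3$. Otherwise, such a vertex $u$ together with $C$ induces the $5$-cycle $u v_0 v_1 v_2 v_3$ plus the chord $v_0 v_3$, i.e., $\overline{P_5}$; induced-subgraph-minimality of $G$ would force $G = \overline{P_5}$, but $\overline{P_5}$ is not critically $2$-connected (removing the degree-$2$ vertex opposite the chord yields $C_4$), a contradiction.

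Applying Lemma \ref{nebesky_result} again produces a second induced $4$-cycle $C' = u_0 u_1 u_2 u_3$, distinct from $C$, with chord $u_0 u_3$ and degree-$2$ internal vertices $u_1, u_2$. The goal is $\{u_0, u_3\} = \{v_0, v_3\}$. If $C, C'$ share an internal vertex, chasing neighbourhoods (using the fact that each degree-$2$ vertex has both neighbours in its cycle) forces $C = C'$, contradicting distinctness. If $C, C'$ share exactly one chord endpoint, say $v_0 = u_0$ and $v_3 \neq u_3$, then the key claim applied to $C$ gives $v_3 u_3 \notin E$, and $2$-connectivity yields a shortest path $v_3 = r_0, r_1, \ldots, r_\ell = u_3$ in $G - v_0$ with $\ell \geq 2$; the key claim applied to both cycles forces $v_0 r_1, v_0 r_{\ell - 1} \notin E$. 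When $\ell = 2$ the induced subgraph on $\{v_0, v_1, v_2, v_3, r_1, u_3\}$ is exactly $C_6^+$, forcing $G = C_6^+$ and so $|V(G)| = 6$, contradicting the count $|V(G)| \geq 8$ in this configuration. When $\ell \geq 3$, a minimal-index argument on shortcut edges $v_0 r_i$ either produces an induced cycle of length at least $5$ (forcing $G$ to be that cycle, a contradiction) or produces an induced $C_6^+$ on a six-vertex subset, again contradicting the vertex count. The remaining case $\{u_0, u_3\} \cap \{v_0, v_3\} = \emptyset$ is handled analogously via a path between $\{v_0, v_3\}$ and $\{u_0, u_3\}$ guaranteed by $2$-connectivity.

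Hence $\{u_0, u_3\} = \{v_0, v_3\}$ and $C, C'$ share the chord $v_0 v_3$. The induced subgraph on $V(C) \cup V(C')$ is then the $6$-cycle $v_0 v_1 v_2 v_3 u_2 u_1$ together with the chord $v_0 v_3$, i.e., $C_6^+$; induced-subgraph-minimality gives $G = C_6^+$. The main obstacle is the intersection case analysis for $C$ and $C'$: verifying chordlessness of the cycles extracted via $2$-connectivity requires careful repeated application of the key claim.
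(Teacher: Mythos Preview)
Your approach diverges from the paper's and, while the outline is plausible, the case analysis is left genuinely incomplete at the hardest points. The paper's proof is much shorter because it exploits the sesquicograph hypothesis one more time rather than chasing cycles directly: after obtaining the induced $4$-cycle $abcd$ with $b,c$ of degree two and $ad\in E(G)$, the paper deletes $b$ and $c$. Since $ad$ is an edge, $G':=G-\{b,c\}$ is still $2$-connected; being a proper induced subgraph of $G$ it is a sesquicograph, so by Proposition~\ref{alternate_definition} the complement $\overline{G'}$ is disconnected. Now the second Nebesk\'y path $wxyz$ survives in $G'$ with $x,y$ still of degree two; writing $G'$ as a join $A\bigtriangledown B$ forces $x$ and $y$ to lie on opposite sides and each side to be contained in $\{w,x,y,z\}$, whence $|V(G')|=4$ and $G\cong C_6^+$. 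This sidesteps your entire intersection analysis of $C$ and $C'$.

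Regarding your argument itself: the ``key claim'' and the $\ell=2$ sub-case are fine, but the step ``a minimal-index argument on shortcut edges $v_0 r_i$'' for $\ell\ge 3$ needs to be written out (in particular you must verify that when the minimal index is $i=2$ the six vertices $v_0,v_1,v_2,v_3,r_1,r_2$ are distinct from each other and from $u_1,u_2,u_3$, so that the induced $C_6^+$ genuinely contradicts the vertex count). More seriously, the final case $\{u_0,u_3\}\cap\{v_0,v_3\}=\emptyset$ is not ``analogous'': there is no single vertex whose removal leaves both endpoints available, so the shortest-path argument you used before does not transfer directly, and you have given no indication of how to set it up. Without that case your proof does not close. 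The paper's trick of passing to $G-\{b,c\}$ and using disconnectedness of its complement is the missing idea that makes all of this unnecessary.
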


\begin{proof}
We may assume that $G$ is not isomorphic to a cycle exceeding four otherwise we have our result. Note that, by Lemma \ref{small_sesquicographs}, $|V(G)| \geq 5$. Since the cycle of length five is the only critically $2$-connected graph on five vertices, we may assume that $|V(G)| \geq 6$. By Lemma \ref{nebesky_result}, $G$ has two distinct paths $P_1 = abcd$ and $P_2=wxyz$ of length three such that their internal vertices have degree two. By Lemma \ref{adjacent_nebesky}, $a$ and $d$ are adjacent, and $w$ and $z$ are adjacent. Consider the graph $G' = G - \{b,c\}$. Note that $G'$ is $2$-connected and so, by Lemma \ref{alternate_definition}, $\overline{G}$ is disconnected. It is now easy to check that $|V(G')|=4$ and so $G$ is isomorphic to the domino.
\end{proof}

\begin{figure}
\renewcommand{\thealgorithm}{}

\begin{algorithmic}
\STATE Set FinalList $\leftarrow \emptyset$, $i \leftarrow 0$
\STATE Generate all two connected graphs of order $6$ using nauty geng \cite{nauty} and store in an iterator $L$

\FOR{$g$ in $L$ such that vertex connectivity of $g$ is $2$ and $\overline{g}$ is $1$}

    \FOR{$v$ in $V(g)$}
        \STATE $h=g \ba v$
        \IF{vertex connectivity of $h < 2$ or vertex connectivity of $\overline{h} < 1$}
            \STATE $i \leftarrow i+1$
        \ENDIF
    \ENDFOR

\IF{$i$ equals $|V(g)|$}

\STATE Add $g$ to FinalList

\ENDIF

\ENDFOR

\end{algorithmic}
\caption{Finding induced-subgraph-minimal non-sesquicographs of order six.}
\label{case_6}
\end{figure}

\begin{proof}[Proof of Theorem \ref{main}]
We may assume that $G$ is not critically $2$-connected otherwise we are done by Proposition \ref{critically_2_connected_case}. By Lemma \ref{minimal_graphs_connectivity}, $G$ has vertex connectivity two and $\overline{G}$ has vertex connectivity one. We first show the following.



\begin{sublemma}
\label{red_cut_vertex}
$\overline{G}$ has at most three cut vertices.
\end{sublemma}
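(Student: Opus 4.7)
The plan is a proof by contradiction: assume $\overline{G}$ has at least four cut vertices $v_1, v_2, v_3, v_4$, and exhibit a proper induced subgraph of $G$ that is not a sesquicograph, contradicting the minimality of $G$.

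The key structural observation I will use is the following: if $u$ is a vertex of $\overline{G}$ lying in a block $B$ and $u$ is not a cut vertex of $\overline{G}$, then every red neighbour of $u$ lies in $V(B)$, so $u$ is green-adjacent in $G$ to every vertex of $V(G) \setminus V(B)$. More generally, for any vertex $x$ of $\overline{G}$, the red neighbours of $x$ are contained in the union of the blocks containing $x$. Let $T$ denote the block-cut tree of $\overline{G}$. A short degree count in $T$ (each cut vertex has degree at least two) shows that four cut vertices force $T$ to have at least five blocks, and hence at least two end blocks.

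I would split into two archetypal cases on the shape of $T$. In the \emph{path} archetype, $T$ is $B_1 - v_1 - B_2 - v_2 - B_3 - v_3 - B_4 - v_4 - B_5$ with each cut vertex $v_i$ lying in exactly two blocks. I pick red neighbours $u_1 \in V(B_1) \setminus \{v_1\}$ of $v_1$ and $u_5 \in V(B_5) \setminus \{v_4\}$ of $v_4$, and verify, using the observation above, that $\{u_1, v_1, v_2, v_3, v_4, u_5\}$ induces $P_6$ in $\overline{G}$ whenever the intermediate blocks $B_2, B_3, B_4$ are all copies of $K_2$. The induced subgraph on these six vertices in $G$ is then $\overline{P_6}$, which contains an induced $\overline{P_5}$, and this contradicts minimality (if $|V(G)| = 6$ then $G = \overline{P_6}$ already properly contains $\overline{P_5}$, and if $|V(G)| > 6$ the $\overline{P_5}$ is proper). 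When some intermediate block $B_i$ is larger than $K_2$, I extend the argument by routing an induced red path through $B_i$ and finding either an induced cycle of length exceeding four in $\overline{G}$ or another forbidden subgraph of $G$. In the \emph{branching} archetype, some block $B_0$ of $T$ contains at least three of the cut vertices; I choose $u_i$ red-adjacent to $v_i$ in the corresponding end block $B_i$ for $i \in \{1,2,3\}$ and analyse the six-vertex induced subgraph of $G$ on $\{u_1, u_2, u_3, v_1, v_2, v_3\}$. A sub-case analysis on the red-edge pattern among $\{v_iv_j : 1 \le i < j \le 3\}$ inside $B_0$ identifies this induced subgraph as one of $H_2, H_3, H_4, H_5$, or as containing $\overline{P_5}$; and the presence of the fourth cut vertex $v_4$ guarantees $|V(G)| > 6$, so the induced copy is proper.

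The principal obstacle is the branching archetype: the internal block $B_0$ can be an arbitrary $2$-connected subgraph, so the pattern of red edges among its cut vertices is not determined by the tree structure alone and requires careful sub-case analysis. A minor subtlety is to ensure that a red neighbour $u_i \in V(B_i) \setminus \{v_i\}$ of $v_i$ always exists, which is automatic when $B_i = K_2$ and otherwise follows from $2$-connectivity of $B_i$. In every configuration, the forbidden induced subgraph we produce is strictly smaller than $G$, completing the contradiction and bounding the number of cut vertices of $\overline{G}$ by three.
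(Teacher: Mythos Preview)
Your plan takes a completely different route from the paper and is substantially more complicated than what the sublemma requires. The paper does not look at the block--cut tree of $\overline{G}$ at all. Instead it uses the green $2$-cut $\{u,v\}$ of $G$ (available by Lemma~\ref{two_cases}): since there are no green edges between the two sides $A$ and $B$ of $G-\{u,v\}$, the red graph $\overline{G}$ contains a complete bipartite graph on $V(A)\cup V(B)$. From this one observes in two lines that $\overline{G}-x$ is connected unless $x$ is the unique red neighbour of $u$, or the unique red neighbour of $v$, or the sole vertex of the smaller side $B$ --- at most three vertices in all. That is the whole proof.

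Your outline, by contrast, has real gaps. First, the dichotomy ``path archetype'' versus ``some block contains at least three cut vertices'' is not exhaustive: with four cut vertices the block--cut tree can fail to be a path because some \emph{cut vertex} lies in three or more blocks, while every block still contains at most two cut vertices (for example, three paths of length two in $\overline{G}$ glued at a common endpoint, with a further cut vertex along one arm). Second, in the branching archetype you assume each $v_i$ has an ``end block'' $B_i$ attached, but the block on the far side of $v_i$ need not be a leaf of the tree, so the $u_i$ you pick may share a block with some $v_j$ or with another $u_j$; the red pattern on $\{u_1,u_2,u_3,v_1,v_2,v_3\}$ is then not controlled, and the promised identification with one of $H_2,\dots,H_5$ or $\overline{P_5}$ is unsubstantiated. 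Third, even in the path archetype the case of non-$K_2$ intermediate blocks is only gestured at. All of this might be repairable with considerable extra work, but it is the wrong tool for the job: the green $2$-cut gives the bound directly.
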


Let $\{u,v\}$ be a $2$-cut of $G$ and let the components of $G-\{u,v\}$ be partitioned into subgraphs $A$ and $B$ such that $|V(A)| \geq |V(B)|$ and $|V(A)|-|V(B)|$ is a minimum. Observe that $\overline{G}-x$ is connected for a vertex $x$ in $V(G)$ unless $x$ is the only red neighbour of $u$ or the only red neighbour of $v$, or $|V(B)|=1$ and $x$ is in $V(B)$. Thus \ref{red_cut_vertex} holds.


We show next that the number of vertices of $G$ can be bounded.

\begin{sublemma}
\label{bound_1}
$|V(G)| \leq 6.$
\end{sublemma}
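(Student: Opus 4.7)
I would argue by contradiction: suppose $|V(G)| \geq 7$, and let $\{u,v\}$ and the partition $(A,B)$ of $V(G-\{u,v\})$ be as in the proof of \ref{red_cut_vertex}. The central structural hypothesis is that $G$ contains no induced cycle of length exceeding four, for otherwise, by Lemma \ref{cycles_are_minimal_graphs}, such a cycle would itself be an induced-subgraph-minimal non-sesquicograph properly contained in $G$, contradicting minimality.

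The first step is to locate a vertex $a \in A$ and a vertex $b \in B$ each adjacent to both $u$ and $v$. I would take a shortest induced $u,v$-path through $A$ and a shortest induced $u,v$-path through $B$ and concatenate them; since $A$ and $B$ are separated by $\{u,v\}$, the only possible chord is $uv$ itself. If $uv \notin E(G)$, the resulting cycle is induced and so has length at most four, forcing each of the two paths to have length exactly two and yielding the required $a$ and $b$. The case $uv \in E(G)$ is handled similarly by combining $uv$ with each path to form smaller induced cycles.

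The second step uses Claim \ref{red_cut_vertex}: $\overline{G}$ has at most three cut vertices, so at least $|V(G)|-3 \geq 4$ vertices $x$ have the property that $\overline{G}-x$ is connected. For each such $x$, the minimality of $G$ and Proposition \ref{alternate_definition} force $G-x$ to fail to be $2$-connected, so $x$ lies in some $2$-cut of $G$. This yields a wealth of $2$-cuts, which combine with the ban on long induced cycles to severely restrict the internal structure of $A$ and $B$.

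Finally, I would split on $|V(B)|$. When $|V(B)|=1$, the unique vertex of $B$ has degree two, $|V(A)| \geq 4$, and $u\text{-}a\text{-}v\text{-}b\text{-}u$ is an induced $4$-cycle; I would analyze the adjacencies of the remaining vertices of $A$ to $\{u,v,a\}$, routing through $b$ to produce either an induced $C_5$, an induced $\overline{P_5}$, or one of $H_1,\ldots,H_5$. When $|V(B)| \geq 2$, the minimality of $|V(A)|-|V(B)|$ forces $|V(A)|, |V(B)| \geq 2$, and the analogous analysis applies on both sides of the $4$-cycle. The main obstacle is the case analysis in this last step: there are many adjacency patterns among the ``extra'' vertices to check, and one must trace each carefully, exploiting $b$ (or $a$) as a detour to locate induced $5$-cycles, or using $\{u,v,a,b\}$ as the seed from which to grow an induced $\overline{P_5}$ or $H_i$ to complete the contradiction.
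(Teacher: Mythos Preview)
Your route differs substantially from the paper's, and while your first two steps are sound, the decisive third step is only sketched, so the argument is incomplete.

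The paper works almost entirely on the complement side. Having shown in \ref{red_cut_vertex} that $\overline{G}$ has at most three cut vertices, it treats the cases of exactly one, two, or three cut vertices of $\overline{G}$ separately. In each case the aim is to locate a vertex $r$ that is not a cut vertex of $\overline{G}$ yet has $G-r$ still $2$-connected; then $\overline{G-r}$ is connected, so $G-r$ is a proper induced non-sesquicograph, contradicting minimality. The verifications of $2$-connectivity are short because vertices lying in different components of $\overline{G}-x_i$ are pairwise green-adjacent, so most of $G$ is already highly connected. Only in one residual subcase does the paper fall back on spotting an induced long cycle.

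Your plan instead tries to exhibit one of the listed obstructions inside $G$. A minor issue first: when $uv\in E(G)$ and the shortest $u,v$-path through $A$ has length three, you obtain an induced $C_4$ containing $uv$ rather than a common neighbour $a\in A$; one must then combine with the $B$-side to produce an induced $\overline{P_5}$ or $C_6^+$, which you gloss over with ``handled similarly''. The real gap is the final case analysis, which you explicitly leave open (``there are many adjacency patterns among the `extra' vertices to check''). With $|V(B)|=1$ and $|V(A)|\ge 4$, for instance, at least three further vertices of $A$ have unconstrained adjacencies to $\{u,v,a\}$ and to one another, and it is not evident that every pattern collapses to one of the seven named $5$- or $6$-vertex obstructions rather than forcing inspection of yet larger pieces. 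Until that analysis is actually carried out the bound is not established; the paper's complement-side argument sidesteps this difficulty entirely.
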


 Assume that $|V(G)|>6$. By \ref{red_cut_vertex}, $\overline{G}$ has at most three cut vertices. First suppose that $\overline{G}$ has one cut vertex $x$. Let the components of $\overline{G}-x$ be partitioned into subgraphs $R_1$ and $R_2$ such that $|V(R_1)| \geq |V(R_2)|$ and $|V(R_1)|-|V(R_2)|$ is a minimum. Since $|V(G)|\geq 7$, we have $|V(R_1)|\geq 3$. Observe that, if $|V(R_2)|\geq 2$, then there exists a vertex $r$ in $R_1$ such that $x$ has two green neighbours in $G-r$. Note that every edge joining a vertex in $R_1$ to a vertex in $R_2$ is a green edge and so $G-r$ is connected. Since every vertex in $V(G)-x$ is in a green $2$-cut, this is a contradiction. Therefore $|V(R_2)|=1$ and so $|V(R_1)|\geq 5$. Let $R_2 = \{\alpha\}$. Note that $G-x$ is $2$-connected since $G$ is not critically $2$-connected. It is now clear that $G-\{x, \alpha\}$ is connected. If $G-\{x,\alpha\}$ has a vertex $r$ such that $G-\{x,\alpha, r\}$ is connected and contains two green neighbours of $x$, then $G-\alpha$ is $2$-connected, a contradiction. It now follows that $G-\{x, \alpha\}$ is a path and its leaves are the only green neighbours of $x$. Note that $G-\alpha$ is a cycle of length exceeding four, a contradiction. 

Next suppose that $\overline{G}$ has two cut vertices $x_1$ and $x_2$. For $\{i,j\} = \{1,2\}$, let $R_i$ be the disjoint union of the components of $\overline{G}-x_i$ that do not contain $x_j$. Let $R_3$ be the subgraph induced on $V(G)-(V(R_1)\cup V(R_2) \cup \{x_1,x_2\})$. We first consider the case when $V(R_3)$ is empty. We may assume that $|V(R_1)| \geq |V(R_2)|$ and so $|V(R_1)|\geq 3$. Note that if $|V(R_2)| \geq 2$, then there is a vertex $r$ in $R_1$ such that $G-r$ is $2$-connected, a contradiction. Therefore $|V(R_2)|=1$ and so $|V(R_1)|\geq 4$. Let $\beta$ be a green neighbour of $x_1$ in $R_1$. Note that $G-r$ is $2$-connected for every vertex $r$ in $V(R_1)-\beta$, a contradiction. Therefore $V(R_3)$ is non-empty. Observe that, if both $R_1$ and $R_2$ have at least two vertices, then $G-r$ is $2$-connected for any vertex $r$ in $R_3$, a contradiction. Therefore we may assume that $|V(R_1)|=1$. We show that neither $R_2$ nor $R_3$ has more than two vertices. Assume that $R_i$ has more than two vertices for some $i$ in $\{2,3\}$. Then there exists a vertex $r$ in $V(R_i)$ such that both $x_1$ and $x_2$ have at least two green neighbours in $G-r$. Note that $G-r$ is $2$-connected, a contradiction. Therefore $|V(R_2)|= |V(R_3)|=2$. Observe that there is a vertex $r$ in $R_3$ such that both $x_1$ and $x_2$ have green degree at least two in $G-r$. It follows that $G-r$ is $2$-connected, a contradiction. Thus $\overline{G}$ has three cut vertices. 

 Let $X=\{x_1,x_2,x_3\}$ be the set of cut vertices of $\overline{G}$. We may assume that for the cut vertex $x_1$ of $\overline{G}$, the components of $\overline{G}-x_1$ can be partitioned into subgraphs $P$ and $Q$ such that $x_2$ is in $P$ and $x_3$ is in $Q$, and $|V(P)|\geq |V(Q)|\geq 2$. Note that all vertices in $P$ are green neighbours of $x_3$ and all vertices in $Q$ are green neighbours of $x_2$. If $|V(P)|\geq 4$, then there is a vertex $r$ in $P$ such that all vertices in $X$ have at least two green neighbours in $G-r$ and so $G-r$ is $2$-connected, a contradiction. Therefore $|V(P)|=|V(Q)|=3$. Note that there is a vertex $r$ in $P \cup Q$ such that all vertices in $X$ have at least two green neighbours in $G-r$ and so $G-r$ is $2$-connected, a contradiction. Thus \ref{bound_1} holds. 

By Lemma \ref{small_sesquicographs}, it is clear that $|V(G)|\geq 5$ and so $|V(G)|$ is either $5$ or $6$. Suppose $|V(G)|=5$. Since $\overline{P_5}$ is the only graph on five vertices that is not critically $2$-connected, has vertex connectivity two, and whose complement has vertex connectivity one, by Lemma \ref{other_minimal_graphs}, we have $G \cong \overline{P_5}$. Next suppose that $|V(G)|=6$. Implementing the algorithm in Figure \ref{case_6} in Sagemeth \cite{sage}, it can be easily checked that $G$ is isomorphic to one of the graphs in $\{H_1, H_2, H_3, H_4, H_5\}$. This completes the proof.
\end{proof}

\begin{proof}[Proof of Corollary \ref{main_corollary}]
Note that every cycle of length exceeding five has the cycle of length five as an induced minor. Also, the domino graph $C_6^+$ contains $\overline{P_5}$ as an induced minor. The result now follows by Theorem \ref{main}.
\end{proof}

\section*{Acknowledgement}

The author thanks James Oxley for helpful suggestions. The author also thanks Thomas Zaslavsky for helpful discussions.


\begin{thebibliography}{99}











\bibitem{corneil} D.G. Corneil, H. Lerchs, L. Stewart Burlingham,  Complement reducible graphs, {\em Discrete Appl. Math.} \textbf{3} (1981), 163--174.

\bibitem{corneil2} D.G. Corneil, Y. Perl, L.K. Stewart, A linear recognition for cographs, {\em SIAM J. Comput.} \textbf{14} (1985), 926--934.


\bibitem{text} R. Diestel, {\em Graph Theory}, Third edition, Springer, Berlin, 2005.


\bibitem{jung} H.A. Jung, On a class of posets and the corresponding comparability graphs, {\em J. Combin. Theory Ser. B} \textbf{24} (1978), 125--133 .

\bibitem{nauty} B.D. McKay and A. Piperno, Practical graph isomorphism II, {\em J. Symbolic Comput.} \textbf{60} (2014), 94--112.

\bibitem{nebesky} L. Nebesky, On induced subgraphs of a block, {\em J. Graph Theory} \textbf{1} (1977), 69--74.

\bibitem{oxjag} J. Oxley and J. Singh, Generalizing cographs to 2-cographs, arxiv: 2103.00403.






\bibitem{sage} SageMath, the Sage Mathematics Software System (Version 8.2), The Sage Developers, 2019, http://www.sagemath.org.

\bibitem{sein} D. Seinsche, On a property of the class of $n$-colourable graphs, {\em J. Combin. Theory Ser. B} \textbf{16} (1974), 191--193 .


\bibitem{west} D.B West, {\em Introduction to Graph Theory}, Second edition, Prentice Hall, Upper Saddle River, N.J., 2001.







\end{thebibliography}
\end{document}